\newcommand{\Z}{\mathbb Z}
\newcommand{\p}{\mathfrak{p}}
\newcommand{\al}{{\alpha}}
\newtheorem{thm}{\bf Theorem}[section]
\newtheorem{lem}{\bf Lemma}[section]
\newtheorem{prop}{\bf Proposition}[section]
\def\ol{\overline}
\newcommand{\biindice}[3]%
{

\begin{array}[t]{c}
#1\\
{\scriptstyle #2}\\
{\scriptstyle #3}
\end{array}
}
\begin{document}

\title[]{Polynomial Index in Dedekind Rings}



\address[M.E. Charkani]{Department of Mathematics, Faculty of Sciences Dhar-Mahraz,  B.P. 1796, Fes, Morocco} \email{mcharkani@gmail.com}
\address[A. Deajim]{Department of Mathematics, King Khalid University,
P.O. Box 9004, Abha, Saudi Arabia} \email{deajim@kku.edu.sa, deajim@gmail.com}

\keywords{Dedekind ring, discrete valuation ring, index of a polynomial} \subjclass[2010]{13F05, 13F30, 13C10, 11R04}

\date{\today}
\maketitle

\begin{center}


{\sc M.E. Charkani }\\
{\footnotesize Department of Mathematics, Faculty of Sciences Dhar-Mahraz, Morocco}\\

{\sc A. Deajim }\\
{\footnotesize Department of Mathematics, King Khalid University, Saudi Arabia}\\
\end{center}

\begin{abstract}
Let $R$ be a Dedekind ring, $\p$ a nonzero prime ideal of $R$, $P\in R[X]$ a monic irreducible polynomial, and $K$ the quotient field of $R$. We give in this paper a lower bound for the $\p$-adic valuation of the index of $P$ over $R$ in terms of the degrees of the monic irreducible factors of the reduction of $P$ modulo $\p$. As an important application, when the lower bound is greater than zero for some $\p$, we conclude that no root of $P$ generates a power integral basis in the field extension of $K$ defined by $P$.
\end{abstract}

\section{\bf Introduction}
It is well-known that the problem of studying the integral closure of a Dedekind ring $A$ in some finite separable extension of its quotient field is related to the problem of studying the integral closures of localizations of $A$ at its nonzero prime ideals. As such localizations are discrete valuation rings, we consider some tools and relevant results over discrete valuation rings.

Let $R$ be a discrete valuation ring and $\p$ its nonzero prime ideal. An important notion which we need here is the notion of index of an irreducible polynomial over $R$, whose definition we recall below for the sake of completion (see \cite{CD2} for a more general treatment).

If $M$ is a nonzero torsion $R$-module of finite type, then $M$ admits a composition series of submodules
$$0=M_0 \subseteq M_1 \subseteq \dots \subseteq M_t =M,$$
where, for each $i=0,1, \dots, t-1$, the quotient $R$-module $M_{i+1}/M_i$ is simple and is isomorphic to $ R/\mathfrak{p}$. Note here that $t$ is an invariant of $M$ independent of the choice of composition series (see \cite[Proposition 6.7]{At-Mac} or \cite[\S 11--Theorem 19]{ZS}). Now, if $M$ is an $R$-module of finite type, we define {\it the order ideal of $M$ over $R$} by (see \cite{CD2}):
$$\mbox{ord}_R(M):= \left \{ \begin{array} {c@{\quad;\quad}l} R & \mbox{if}\, M=0 \\
0 & \mbox{if}\, M \, \mbox{is not a torsion $R$-module}\\
\mathfrak{p}^t & \mbox{if}\, M \, \mbox{is a nonzero torsion $R$-module}.
\end{array} \right. $$
If $N \subseteq M$ are two projective $R$-modules of the same finite constant rank, then we define the {\it index of $M$ over $N$} to be the ideal $\mbox{ord}_R(M/N)$, and we denote it by $[M:N]_R$. By definition, note that $M=N$ if and only if $[M:N]_R=R$.

Let $K$ be the quotient field of $R$, $L$ a finite separable extension of $K$, $O_L$ the integral closure of $R$ in $L$, $\alpha\in O_L$ a primitive element of $L$, and $P \in R[X]$ the minimal polynomial of $\alpha$ over $R$. As $R[\al] \subseteq O_L$ are projective $R$-modules of the same constant rank, $[O_L: R[\alpha]]_R$ is well defined. We call $[O_L:R[\al]]_R$ the {\it the index of $P$} (or {\it of $\alpha$}) and denote it by $\mbox{Ind}_R(P)$ (see \cite{Cha-Dea} for instance). This notion of index is generalized to the case when $R$ is a Dedekind domain (see \cite{CD2}).
Now recall the generalized discriminant-index formula (see \cite{CD2})
$$\mbox{Disc}_R(P)=\mbox{Ind}_R(P)^2\, D_R(O_L),$$
where $\mbox{Disc}_R(P)=\mbox{disc}(P)R$ is the principal ideal of $R$ generated by the usual discriminant of $P$, and $D_R(O_L)$ is the relative discriminant of $O_L$ over $R$ in the sense of \cite{FT} (see also \cite{EC}).

If $R$ is a Dedekind ring, $\p$ a nonzero prime ideal of $R$, and $P(X)\in R[X]$ is a monic irreducible polynomial defining a separable field extension $L$ over the quotient field of $R$, then our main result, Thoerem \ref{main} below, gives a lower bound on the $\p$-adic valuation of $\mbox{Ind}_R(P)$ in terms of the degrees of the monic irreducible factors of the reduction of $P$ modulo $\p$. An important application of this theorem is to conclude, when the lower bound is greater than zero for some $\p$, that a root of $P(X)$ in $L$ does not generate a power basis for the integral closure of $R$ in $L$.

\begin{thm}\label{main}
Let $R$ be a Dedekind ring, $K$ its quotient field, $L$ a finite separable extension of $K$, $O_L$ the integral closure of $R$ in $L$, $L=K(\al)$ for some $\al\in O_L$, $P\in R[X]$ the minimal polynomial of $\al$ over $K$, and $A=R[\al]$. Let $\p$ be a nonzero prime ideal of $R$, $\ol{P}=\prod_{i=0}^r \ol{P_i}^{l_i}$ the monic irreducible factorization of $\ol{P}$ modulo $\p$, and $P_i\in R[X]$ a monic lift of $\ol{P_i}$ for each $i$. Suppose that $T\in R[X]$ with $\ol{T}$ nonzero and $P=\prod_{i=0}^r P_i^{l_i} +a T$ for $a\in \p-\p^2$. Let $t_i$ be the highest power of $\ol{P_i}$ that divides $\ol{T}$ and set $s_i=\mbox{min}\{[\cfrac{l_i}{2}], t_i\}$. Then,
$$\nu_\p(\mbox{Ind}_R(P) \geq \sum_{i=1}^r s_i \mbox{deg}\,(P_i)\;(=\mbox{deg}\,(\prod_{i=1}^r P_i^{s_i})).$$
\end{thm}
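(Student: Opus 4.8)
The plan is to reduce immediately to the local case by replacing $R$ with its localization $R_\p$ (a discrete valuation ring), since the index and its $\p$-adic valuation are unchanged under localization and $\ol{P}$ factors the same way. So assume from now on that $R$ is a DVR with maximal ideal $\p=(a)$, residue field $k=R/\p$, and uniformizer $a$ (note $a\in\p-\p^2$ is exactly a uniformizer). Write $Q=\prod_{i=0}^r P_i^{l_i}$, so $P=Q+aT$. The key object to control is the quotient module $O_L/A$, whose order ideal is $\mbox{Ind}_R(P)$; I would instead bound this from below by exhibiting an explicit overorder $B$ of $A$ inside $O_L$ (equivalently, an $R$-submodule of $O_L$ strictly containing $A$ of the right rank) and computing $[B:A]_R$, since $\nu_\p(\mbox{Ind}_R(P))=\nu_\p([O_L:A]_R)\ge \nu_\p([B:A]_R)$.

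The heart of the argument is the construction of $B$. For each $i$ with $s_i\ge 1$, the hypothesis says $\ol{P_i}^{t_i}\mid \ol{T}$, so write $T\equiv P_i^{t_i}U_i \pmod{\p}$ for suitable lifts; more usefully, working one prime at a time, I would look at the element $\theta_i = \dfrac{P_i(\al)^{s_i}}{a} \in L$ (or a slight variant using a factor of $T$), and show $\theta_i$ is integral over $R$. Integrality follows because $P=Q+aT$ gives a congruence $Q(\al)=-aT(\al)$, i.e. $\prod_j P_j(\al)^{l_j} = -aT(\al)$; since $s_i\le t_i$ one can factor $P_i^{s_i}$ out of $T(\al)$ up to $\p$, and since $s_i\le \lfloor l_i/2\rfloor$ one has $2s_i\le l_i$, so $P_i(\al)^{2s_i}$ divides the left-hand side, which lets one write $\theta_i^2 = P_i(\al)^{2s_i}/a^2$ as (an algebraic integer)$\times$(something in $A$), forcing $\theta_i$ to satisfy a monic equation over $R$. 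Then set $B = A + \sum_{i:s_i\ge1} R[\al]\cdot\theta_i$, an $R$-order with $A\subseteq B\subseteq O_L$.

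Finally I would compute $[B:A]_R$. The $R$-module $B/A$ is generated over $R[\al]/(\text{relations})$ by the images of the $\theta_i$, and the point is that $\theta_i$ "divides by $a$" on a piece of $A$ of $R$-rank $\mbox{deg}(P_i)$ — more precisely $\theta_i \cdot P_i(\al)^{l_i - s_i} \in A$ but already $a\theta_i = P_i(\al)^{s_i}\in A$, and the $k$-vector space $(A+R\theta_i)/A$ has dimension $\mbox{deg}(P_i)$ because $P_i(\al)^{s_i}\bmod \p A$ spans, inside $A/\p A \cong k[X]/(\ol{P})$, a space isomorphic to $\ol{P_i}^{s_i}k[X]/(\ol{P_i}^{l_i}) $... the length contribution being exactly $s_i\,\mbox{deg}(P_i)$. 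Summing over $i$, and checking that the contributions from distinct $i$ are independent (which follows from the coprimality of the $\ol{P_i}$ and CRT on $A/\p A$), gives $\nu_\p([B:A]_R) \ge \sum_i s_i\,\mbox{deg}(P_i)$, hence the claimed bound on $\nu_\p(\mbox{Ind}_R(P))$.

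The main obstacle I anticipate is the \emph{integrality} of the $\theta_i$: one must carefully exploit both inequalities defining $s_i$ (the $t_i$ bound makes $a$ divide the relevant power of $P_i(\al)$ "once extra" via $T$, and the $\lfloor l_i/2\rfloor$ bound is what makes the square $\theta_i^2$ land back in $\tfrac1{a^2}\cdot(\text{ideal divisible by }a^2$-worth of integral stuff$)$ so that a monic relation of degree related to $[L:K]/\mbox{deg}(P_i)$ materializes). Getting a clean monic equation — rather than just "$\theta_i$ is in the integral closure of a localization" — and then verifying that the $\theta_i$ jointly enlarge $A$ by the full predicted length (no overlap, no collapse) is the delicate bookkeeping; the coprimality of the $\ol{P_i}$ modulo $\p$ is what ultimately rescues the independence.
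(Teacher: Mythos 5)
Your overall strategy is the same as the paper's (localize at $\p$, exhibit an explicit $R$-module $B$ with $A\subseteq B\subseteq O_L$, and bound $\nu_\p(\mbox{Ind}_R(P))$ below by $\nu_\p([B:A]_R)$), but the concrete construction you propose breaks down exactly at the point you flag as the main obstacle: the elements $\theta_i=P_i(\al)^{s_i}/a$ are \emph{not} integral in general. Your argument only yields a relation of the form $C(\al)\,\theta_i^2+P_i(\al)^{t_i-s_i}V(\al)\,\theta_i+W(\al)=0$ with $C=P_i^{\,l_i-2s_i}\prod_{j\neq i}P_j^{\,l_j}$, and $C(\al)$ is not a unit, so no monic equation over $R$ results. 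A concrete counterexample: take $R=\Z_{(2)}$, $\p=2R$, $P=X^3+X^2+2X+4$ (irreducible over $\Q$). Then $\ol{P}=X^2(X+1)$, $T=X+2$, $\ol{T}=X$, so for $P_1=X$ one has $l_1=2$, $t_1=1$, $s_1=1$, and your $\theta_1=\al/2$ has minimal polynomial $X^3+\tfrac12X^2+\tfrac12X+\tfrac12$, hence $\theta_1\notin O_L$, and your $B$ is not contained in $O_L$. The trouble is at the primes of $O_L$ above $\p$ attached to the \emph{other} factors $\ol{P_j}$, $j\neq i$: there $P_i(\al)$ is a unit while $a$ is not, so dividing by $a$ destroys integrality; the two inequalities $s_i\le t_i$ and $2s_i\le l_i$ only control valuations at the primes attached to $\ol{P_i}$. (Your length count is also off: the space $\ol{P_i}^{\,s_i}k[X]/(\ol{P_i}^{\,l_i})$ you invoke has dimension $(l_i-s_i)\deg P_i$, not $s_i\deg P_i$.)

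The repair is to adjoin the \emph{complementary} cofactor rather than $P_i(\al)^{s_i}/a$. The paper sets $f=\prod_i P_i^{\,s_i}$, forms the fractional ideal $\mathfrak{a}=\pi A+f(\al)A$ of $A$, and proves (Lemma \ref{Stab}) that $\mbox{Stab}_L(\mathfrak{a})=A+\frac{U(\al)}{\pi}A$ with $\ol{U}=\prod_i\ol{P_i}^{\,l_i-s_i}$; integrality is then automatic, since the stabilizer of a nonzero fractional ideal of $A$ lies in $O_L$, with no need for an explicit monic equation. Because $U$ carries the factors $P_j^{\,l_j-s_j}$ for \emph{all} $j$, the element $U(\al)/\pi$ behaves well at every prime above $\p$, and the index computation is the elementary Lemma \ref{V} argument: $[A+\frac{U(\al)}{\pi}A:A]_R=\p^{\,n-\deg U}=\p^{\sum_i s_i\deg P_i}$. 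In particular a single adjoined element accounts for the whole bound, so the independence/CRT bookkeeping over the various $i$ that you anticipate is never needed. If you want to salvage your plan, replace each $\theta_i$ by $U(\al)/\pi$ (or prove the stabilizer description directly, as in Lemma \ref{Stab}); as written, the integrality step is a genuine gap, not merely a delicate point.
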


\section{\bf Lemmas}
For a nonzero fractional ideal $\mathfrak{a}$ of an integral domain $R$, denote by $\mbox{Stab}_K(\mathfrak{a})$ the fractional ideal
$$(\mathfrak{a}:_K \mathfrak{a}):=\{\, x\in K \, | \, x\mathfrak{a} \subseteq \mathfrak{a}\,\},$$
where $K$ is the quotient field of $R$. We call $\mbox{Stab}_K(\mathfrak{a})$ the $K$-stabilizer of $\mathfrak{a}$. Note that $R\subseteq \mbox{Stab}_K(\mathfrak{a})$. It is known that $\mbox{Stab}_K(\mathfrak{a})$ is the largest subring $B$ of $K$ such that $\mathfrak{a}$ is a $B$-module. Further, if $R$ is noetherian and $\ol{R}$ is the integral closure of $R$ in $K$, then $\ol{R}=\bigcup_{\mathfrak{a}} \mbox{Stab}_K(\mathfrak{a})$, where $\mathfrak{a}$ runs over all nonzero fractional ideals $\mathfrak{a}$ of $R$ (see \cite[Proposition 2.4.8]{SH}).

By $(.\, , .)$ (resp. $[.\, , .]$) we mean the usual notation for the greatest common divisor (resp. least common multiple), and by $[.]$ we mean the integral part of a number.

Now let $R$ be a discrete valuation ring, $\pi$ a uniformizer of $R$, $\p=\pi R$ the prime ideal of $R$, $k=R/\p$ the residue field of $R$, $K$ the quotient field of $R$, $L$ a finite separable extension of $K$, $O_L$ the integral closure of $R$ in $L$, $L=K(\al)$ for some $\al \in O_L$, $P(X)\in R[X]$ the minimal polynomial of $\al$ over $R$, and $A=R[\al]$. For a polynomial $f(X)=\sum_{i=0}^t a_i X^i\in R[X]$, by $\ol{f}(X)$ we mean the polynomial $\sum_{i=0}^t \ol{a_i} X^i \in k[X]$ resulting from reducing all coefficients of $f(X)$ modulo $\p$. \\

\begin{lem}\label{V}
Keep the notation as above. Let $W \in R[X]$ be monic such that
$\ol{W}$ divides $\ol{P}$ in $k[X]$. Then $\,M=\,A + (W(\alpha)/\pi)\,A$ is a free $R$-module with $\displaystyle{[M\,: \,A]_R = \mathfrak{p}^{n-m},}$
where $n = \mbox{deg}\,(P)$ and $m=\mbox{deg}\,(W)$.
\end{lem}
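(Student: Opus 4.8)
The plan is to exhibit an explicit $R$-basis of $M$ and to read off the index from the matrix expressing the standard basis $1,\alpha,\dots,\alpha^{n-1}$ of $A=R[\alpha]\cong R[X]/(P)$ in terms of it. Put $\beta=W(\alpha)/\pi$, so $M=A+\beta A$. Since $W$ and $P$ are monic and $\ol W\mid\ol P$, we have $m=\mbox{deg}\,(W)\le n=\mbox{deg}\,(P)$. Dividing $P$ by the monic polynomial $W$ in $R[X]$ gives $P=WV+U$ with $V\in R[X]$ monic of degree $n-m$ and $\mbox{deg}\,(U)<m$; reducing modulo $\p$ and using $\ol W\mid\ol P$ together with $\mbox{deg}\,(\ol U)<\mbox{deg}\,(\ol W)$ forces $\ol U=0$, so $U=\pi S$ with $S\in R[X]$, $\mbox{deg}\,(S)<m$. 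Evaluating $P=WV+\pi S$ at $\alpha$ and dividing by $\pi$ gives the key relation
$$\beta\,V(\alpha)=-S(\alpha)\in A.\qquad(\star)$$

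Next I would show that $M$ is generated over $R$ by the $n$ elements $1,\alpha,\dots,\alpha^{m-1},\ \beta,\beta\alpha,\dots,\beta\alpha^{n-m-1}$. Indeed $A$ is spanned by $1,\dots,\alpha^{n-1}$ and hence $\beta A$ by $\beta,\dots,\beta\alpha^{n-1}$; relation $(\star)$ expresses $\beta\alpha^{n-m}$ through $A$ and the $\beta\alpha^j$ with $j<n-m$, and multiplying $(\star)$ repeatedly by $\alpha$ reduces every $\beta\alpha^j$ with $j\ge n-m$ into $A+\sum_{j<n-m}R\beta\alpha^j$; thus $M=A+\sum_{j=0}^{n-m-1}R\beta\alpha^j$. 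Writing $W=X^m+w_{m-1}X^{m-1}+\dots+w_0$, the identity $\pi\beta\alpha^j=W(\alpha)\alpha^j=\alpha^{m+j}+w_{m-1}\alpha^{m+j-1}+\dots+w_0\alpha^j$ lets one solve for $\alpha^{m+j}$ and, by induction on the exponent, place every $\alpha^i$ with $m\le i\le n-1$ in $\sum_{i<m}R\alpha^i+\sum_{j<n-m}R\beta\alpha^j$. This yields the asserted generating set.

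Finally, order these generators of $M$ as $(1,\dots,\alpha^{m-1},\beta,\dots,\beta\alpha^{n-m-1})$ and the basis of $A$ as $(1,\dots,\alpha^{m-1},\alpha^m,\dots,\alpha^{n-1})$, and let $C$ be the matrix whose columns are the latter expressed in the former. The first $m$ columns are the first $m$ standard basis vectors; and for $m\le i\le n-1$ the identity $\alpha^i=\pi\beta\alpha^{i-m}-(w_{m-1}\alpha^{i-1}+\dots+w_0\alpha^{i-m})$, expanded recursively, shows that $\alpha^i$ equals $\pi\,\beta\alpha^{i-m}$ plus a combination of the $\alpha^{i'}$ with $i'<m$ and of the $\beta\alpha^{j'}$ with $j'<i-m$. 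Hence, with respect to a suitable ordering, $C$ has the block form $\begin{pmatrix}I_m&*\\0&T\end{pmatrix}$ with $T$ a triangular $(n-m)\times(n-m)$ matrix all of whose diagonal entries equal $\pi$, so $\det C=\pm\,\pi^{n-m}\ne 0$. The nonvanishing of $\det C$ shows the $n$ generators of $M$ are $K$-linearly independent in $L$, hence form an $R$-basis, so $M$ is free of rank $n$; and since $M/A\cong R^n/CR^n$ is a torsion module of length $\nu_\p(\det C)=n-m$, we get $[M:A]_R=\mbox{ord}_R(M/A)=\p^{n-m}$.

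The only step I expect to require real work is the bookkeeping in the two reductions of the second paragraph---checking that exactly $n$ of the obvious $2n$ generators of $M=A+\beta A$ survive and that $C$ comes out triangular. As an alternative that avoids the explicit basis, one may note that multiplication by $\beta$ gives an $R$-isomorphism $M/A\cong A/\mathfrak I$ with $\mathfrak I=\{x\in A:W(\alpha)x\in\pi A\}$; here $\mathfrak I\supseteq\p A$ and $\mathfrak I/\p A$ is the kernel of multiplication by $\ol W$ on $A/\p A\cong k[X]/(\ol P)$, so writing $\ol P=\ol W\,\ol V$ one finds $\dim_k(A/\mathfrak I)=n-m$, whence $\mbox{ord}_R(M/A)=\p^{n-m}$ and freeness of $M$ follows from the structure theorem over the discrete valuation ring $R$ since $M$ is a finitely generated torsion-free module of rank $n$.
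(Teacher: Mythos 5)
Your argument is correct, but it takes a genuinely different route from the paper. The paper's proof is very short: it sandwiches $\pi M\subseteq A\subseteq M$, notes that $M$ is automatically free of rank $n$ over the discrete valuation ring $R$, computes $A/\pi M\cong k[X]/(\ol{W})\cong k^m$ (so $[A:\pi M]_R=\p^m$), and then divides out of the multiplicativity relation $[M:A]_R\,[A:\pi M]_R=[M:\pi M]_R=\p^n$. You instead build an explicit $R$-basis of $M$: the division $P=WV+\pi S$ (with $\ol{W}\mid\ol{P}$ forcing the remainder into $\pi R[X]$) yields the relation $\beta V(\alpha)=-S(\alpha)$, from which you show $1,\dots,\alpha^{m-1},\beta,\dots,\beta\alpha^{n-m-1}$ generate $M$, and the triangular change-of-basis matrix with diagonal entries $\pi$ gives both freeness and $\mathrm{ord}_R(M/A)=\p^{n-m}$ via $\nu_\p(\det C)=n-m$. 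This is more computational bookkeeping than the paper needs, but it buys an explicit basis of $M$ (useful algorithmically, in the spirit of Cohen's round-2 style computations), whereas the paper's argument is shorter and cleaner because freeness and the index both drop out of the sandwich without any matrix. Your sketched alternative (multiplication by $\beta$ giving $M/A\cong A/\mathfrak{I}$ with $\mathfrak{I}/\p A$ the kernel of $\ol{W}$ on $k[X]/(\ol{P})$, of dimension $m$) is closest in spirit to the paper, except that it computes the quotient $M/A$ directly rather than $A/\pi M$ combined with multiplicativity; both are valid.
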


\begin{proof}
Since $R$ is a discrete valuation ring and $A$ is a free $R$-module of finite rank $n$ and $\pi\,M \subseteq A \subseteq M$, it follows that $\pi\,M$ and, thus, $M$ are free $R$-modules of rank $n$ as well. Since $A/\pi\,M \cong k[X]/(\ol{W}) \cong k^m = (R/\mathfrak{p})^m$ as $R$-modules and $[A:\pi\,M]_R =\mbox{ord}_R(A/\pi\,M)$, $[A:\pi\,M]_R =\mbox{ord}_R((R/\mathfrak{p})^m)=\mathfrak{p}^m$. On the other hand, as $[M:A]_R\,[A:\pi\,M]_R=[M:\pi\,M]_R=\mbox{ord}(M/\pi\,M)=\mathfrak{p}^n,$ it follows that $[M:A]_R=\mathfrak{p}^{n-m}$ as claimed.
\end{proof}

\begin{lem}\label{Stab}
Keep the notation of Lemma \ref{V}. Let $\displaystyle{\mathfrak{a}=\pi \,A+f(\alpha)\,A}$ for some monic $f\in R[X]$ such that $\overline{f}$ divides $\overline{P}$ in $k[X]$. Let $g, T \in R[X]$ be such that $P=fg +\pi T$ with $\overline{T}$ nonzero, and $D, h, U\in R[X]$ such that $\overline{D}=(\overline{f}, \overline{g}, \overline{T})$,  $\overline{h}=\overline{f}/(\overline{f}, \overline{T})$, and $\ol{U}=\ol{P}/\ol{D}$. Then we have the following:

(i) $\ol{U}=[\overline{h}\overline{g}, \overline{f}]$.

(ii) $\mbox{Stab}_L(\mathfrak{a})= A + \cfrac{U(\alpha)}{\pi}\, A \subseteq O_L$.

(iii) $\mbox{Stab}_L(\mathfrak{a})$ is a free $R$-submodule of $O_L$ and $[\mbox{Stab}_L(\mathfrak{a}):\,A]_R = \mathfrak{p}^{\mbox{deg}\,(D)}$.
\end{lem}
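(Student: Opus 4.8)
\medskip
\noindent\textbf{Proof proposal.}
The plan is to establish (i) as a divisibility identity in $k[X]$, then prove the two inclusions in the equality of (ii), and finally deduce (iii) from Lemma \ref{V}. For (i), reducing $P=fg+\pi T$ modulo $\p$ gives $\ol{P}=\ol{f}\,\ol{g}$, so $\ol{U}=\ol{P}/\ol{D}=\ol{f}\,\ol{g}/\ol{D}$; writing $\delta=(\ol{f},\ol{T})$ so that $\ol{f}=\ol{h}\,\delta$, and using in the UFD $k[X]$ the identities $[\ol{h}\,\ol{a},\ol{h}\,\ol{b}]=\ol{h}\,[\ol{a},\ol{b}]$ and $[\ol{a},\ol{b}]=\ol{a}\,\ol{b}/(\ol{a},\ol{b})$ together with associativity of the gcd,
$$[\ol{h}\,\ol{g},\ol{f}]=[\ol{h}\,\ol{g},\ol{h}\,\delta]=\ol{h}\,[\ol{g},\delta]=\frac{\ol{h}\,\ol{g}\,\delta}{(\ol{g},\delta)}=\frac{\ol{f}\,\ol{g}}{(\ol{f},\ol{g},\ol{T})}=\frac{\ol{P}}{\ol{D}}=\ol{U}.$$

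Since $\mbox{Stab}_L(\mathfrak{a})$ is a subring of $L$ containing $A$, the inclusion $A+\frac{U(\alpha)}{\pi}A\subseteq\mbox{Stab}_L(\mathfrak{a})$ reduces to $\frac{U(\alpha)}{\pi}\mathfrak{a}\subseteq\mathfrak{a}$, i.e.\ to $U(\alpha)\in\mathfrak{a}$ and $\frac{U(\alpha)f(\alpha)}{\pi}\in\mathfrak{a}$. From $\ol{D}\mid\ol{g}$ we get $\ol{U}=\ol{f}\cdot(\ol{g}/\ol{D})$, so $\ol{f}\mid\ol{U}$ and $U(\alpha)\in\pi A+f(\alpha)A=\mathfrak{a}$. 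For the second, use the grouping $\ol{U}=\ol{g}\cdot(\ol{f}/\ol{D})$ (valid as $\ol{D}\mid\ol{f}$): picking $E\in R[X]$ lifting $\ol{f}/\ol{D}$, one has $U=gE+\pi W$ for some $W\in R[X]$, and since $P(\alpha)=0$ gives $f(\alpha)g(\alpha)=-\pi T(\alpha)$,
$$f(\alpha)U(\alpha)=f(\alpha)g(\alpha)E(\alpha)+\pi f(\alpha)W(\alpha)=\pi\bigl(f(\alpha)W(\alpha)-T(\alpha)E(\alpha)\bigr),$$
where $f(\alpha)W(\alpha)\in f(\alpha)A\subseteq\mathfrak{a}$ and $T(\alpha)E(\alpha)\in\mathfrak{a}$ because $\ol{f}\mid\ol{T}\cdot(\ol{f}/\ol{D})$ (equivalently $\ol{D}\mid\ol{T}$, clear from $\ol{D}=(\ol{f},\ol{g},\ol{T})$); hence $\frac{U(\alpha)f(\alpha)}{\pi}\in\mathfrak{a}$.

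For the reverse inclusion, first $\mbox{Stab}_L(\mathfrak{a})=(\mathfrak{a}:_L\mathfrak{a})\subseteq O_L$: $\mathfrak{a}$ is a faithful finitely generated $R$-module (it contains $\pi$ and lies in the free module $A$), so by the determinant trick any element stabilizing it is integral over $R$. Then I would pin down $\mbox{Stab}_L(\mathfrak{a})$ by a length count. One checks that $x\in\mbox{Stab}_L(\mathfrak{a})$ iff $\pi x\in\mathfrak{a}$ and $f(\alpha)x\in\mathfrak{a}$, iff $y:=\pi x$ lies in $J:=\{y\in\mathfrak{a}:f(\alpha)y\in\pi\mathfrak{a}\}$, so $\mbox{Stab}_L(\mathfrak{a})=\frac{1}{\pi}J$ and multiplication by $\pi$ identifies $\mbox{Stab}_L(\mathfrak{a})/A$ with $J/\pi A$; the latter is the kernel of the $R$-linear map $\mathfrak{a}/\pi A\to\mathfrak{a}/\pi\mathfrak{a}$, $y\mapsto f(\alpha)y$, whose image is $(f(\alpha)\mathfrak{a}+\pi\mathfrak{a})/\pi\mathfrak{a}=\mathfrak{a}^2/\pi\mathfrak{a}$. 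Since $\mathfrak{a}$ is $R$-free of rank $n=\mbox{deg}(P)$, $A/\mathfrak{a}\cong k[X]/(\ol{f})$ and $A/\pi A\cong k[X]/(\ol{P})$, comparing $R$-lengths yields $\mbox{length}_R(\mbox{Stab}_L(\mathfrak{a})/A)=\mbox{length}_R(\mathfrak{a}/\mathfrak{a}^2)-\mbox{deg}(f)$. Finally $\mathfrak{a}/\mathfrak{a}^2$ is generated over $A/\mathfrak{a}\cong k[X]/(\ol{f})$ by the classes of $\pi$ and $f(\alpha)$, subject to the relation induced by $\pi T(\alpha)+f(\alpha)g(\alpha)=0$, hence is a quotient of $(A/\mathfrak{a})^2/\langle(\ol{T},\ol{g})\rangle$; the annihilator of $(\ol{T},\ol{g})$ in $k[X]/(\ol{f})$ is generated by $[\,\ol{f}/(\ol{f},\ol{T}),\ \ol{f}/(\ol{f},\ol{g})\,]$, which a valuation-by-valuation check identifies with $\ol{f}/\ol{D}$, so $\mbox{length}_R(\mathfrak{a}/\mathfrak{a}^2)\le 2\mbox{deg}(f)-(\mbox{deg}(f)-\mbox{deg}(D))=\mbox{deg}(f)+\mbox{deg}(D)$ and therefore $\mbox{length}_R(\mbox{Stab}_L(\mathfrak{a})/A)\le\mbox{deg}(D)$. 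Since $\ol{U}\,\ol{D}=\ol{P}$, Lemma \ref{V} applied to the monic lift $W=U$ shows $A+\frac{U(\alpha)}{\pi}A$ is $R$-free with $[A+\frac{U(\alpha)}{\pi}A:A]_R=\mathfrak{p}^{\,n-\mbox{deg}(U)}=\mathfrak{p}^{\,\mbox{deg}(D)}$, i.e.\ $\mbox{length}_R((A+\frac{U(\alpha)}{\pi}A)/A)=\mbox{deg}(D)$; combined with the inclusion $A+\frac{U(\alpha)}{\pi}A\subseteq\mbox{Stab}_L(\mathfrak{a})$ this forces equality, giving (ii), and (iii) follows at once.

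The only real obstacle is the estimate $\mbox{length}_R(\mbox{Stab}_L(\mathfrak{a})/A)\le\mbox{deg}(D)$: since $\mathfrak{a}=(\pi,f(\alpha))$ need not be generated by a regular sequence, $\mathfrak{a}/\mathfrak{a}^2$ is a proper quotient of $(A/\mathfrak{a})^2$, and its defect must be computed exactly; the relation $\pi T(\alpha)=-f(\alpha)g(\alpha)$ supplied by $P(\alpha)=0$ is what controls this, and recognizing the resulting annihilator ideal as $\ol{f}/\ol{D}$ is the heart of the argument. Everything else is routine bookkeeping with lifts and finite-length modules.
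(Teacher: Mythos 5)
Your proposal is correct, but your proof of the key equality in (ii) takes a genuinely different route from the paper's. The paper argues elementwise: any element of $\mbox{Stab}_L(\mathfrak{a})$ is of the form $Q(\alpha)/\pi$, and a chain of lifting substitutions ($Q_1,\dots,Q_8$) shows that $\pi x\in\mathfrak{a}$ iff $\ol{f}\mid\ol{Q}$ and $xf(\alpha)\in\mathfrak{a}$ iff $\ol{g}\,\ol{h}\mid\ol{Q}$; combined with (i) this says precisely that $\ol{U}=[\ol{h}\ol{g},\ol{f}]$ divides $\ol{Q}$, which gives the equality in (ii), and then (iii) follows from Lemma \ref{V} with $W=U$ (the paper gets $\mbox{Stab}_L(\mathfrak{a})\subseteq O_L$ by citing the union-of-stabilizers fact from Swanson--Huneke). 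You instead verify the easy inclusion $A+\frac{U(\alpha)}{\pi}A\subseteq\mbox{Stab}_L(\mathfrak{a})$ directly from $f(\alpha)g(\alpha)=-\pi T(\alpha)$ and the divisibilities $\ol{D}\mid\ol{g}$, $\ol{D}\mid\ol{T}$; you get integrality by the determinant trick (more self-contained than the citation); and you obtain the reverse inclusion by a length count: identifying $\mbox{Stab}_L(\mathfrak{a})/A$ with the kernel of multiplication by $f(\alpha)$ on $\mathfrak{a}/\pi A$, bounding $\mbox{length}_R(\mathfrak{a}/\mathfrak{a}^2)\le\deg(f)+\deg(D)$ via the syzygy $(\ol{T},\ol{g})$ and the annihilator identity $[\,\ol{f}/(\ol{f},\ol{T}),\,\ol{f}/(\ol{f},\ol{g})\,]=\ol{f}/\ol{D}$, and letting Lemma \ref{V} (which gives $\mbox{length}_R\bigl((A+\frac{U(\alpha)}{\pi}A)/A\bigr)=\deg(D)$) force equality. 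I checked the individual steps (the identification $f(\alpha)\mathfrak{a}+\pi\mathfrak{a}=\mathfrak{a}^2$, the length bookkeeping using freeness of $\mathfrak{a}$ of rank $n$, and the annihilator computation) and they hold. What each approach buys: the paper's proof is entirely elementary and yields an explicit description of the stabilizer's elements (divisibility of $\ol{Q}$ by $\ol{U}$), at the cost of the substitution bookkeeping; yours is conceptually cleaner, working with the conormal module $\mathfrak{a}/\mathfrak{a}^2$, but it identifies $\mbox{Stab}_L(\mathfrak{a})$ only indirectly, by squeezing lengths, and so needs both the forward inclusion and Lemma \ref{V} as inputs rather than obtaining (iii) as a corollary. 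Two small points to make explicit if you write this up: all the quotients you compare do have finite $R$-length (they are subquotients of $A/\pi^2 A$, say), and since $A+\frac{U(\alpha)}{\pi}A$ depends only on $\ol{U}$, you may indeed replace $U$ by a monic lift before applying Lemma \ref{V}.
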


\begin{proof}$\\$
\indent (i) $
\ol{U}=\cfrac{\ol{g}\,\ol{f}}{(\ol{g},
\ol{f}, \ol{T})}=\cfrac{\ol{g}\,\ol{h}(\ol{f},\ol{T})}{(\ol{g},
(\ol{f}, \ol{T}))}= \ol{h}[\ol{g}, (\ol{f}, \ol{T})] = [\ol{h}\,
\ol{g}, \ol{h} (\ol{f}, \ol{T})] = [\ol{h}\,\ol{g}, \ol{f}].$

\indent (ii) Note, first, that $\mathfrak{a}$ is a nonzero fractional ideal of $A$. So, $\mbox{Stab}_L(\mathfrak{a}) \subseteq O_L$ follows from the fact that $O_L=\bigcup_{\mathfrak{a}} \mbox{Stab}_L(\mathfrak{a})$, where $\mathfrak{a}$ runs over all nonzero fractional ideals of $A$ (see the paragraph preceding this lemma). It remains to show the proposed equality. By part (i),
$
\ol{U}= [\ol{h}\,\ol{g}, \ol{f}]$. So, it suffices to show that if $x=Q(\alpha)/\pi$ for some $Q\in R[X]$, then $x\in \mbox{Stab}_L(\mathfrak{a})$ if and only if both $\ol{f}$ and $\ol{g}\,\ol{h}$ divide $\ol{Q}$. Indeed, as $x\in \mbox{Stab}_L(\mathfrak{a})$ if and only if $\pi x, xf(\alpha) \in \mathfrak{a}$, proving the equality will be complete by showing that $\pi x \in \mathfrak{a}$ if and only if $\ol{f}$ divides $\ol{Q}$, and $xf(\alpha) \in \mathfrak{a}$ if and only if $\ol{g}\,\ol{h}$ divides $\ol{Q}$.

On the one hand, we see that $\pi x\in \mathfrak{a}$ if and only if
there exist $F, G\ \in  R[X]$ such that $Q(\alpha) = \pi F(\alpha) +
f(\alpha) G(\alpha)$. Since $P$ is the minimal polynomial of
$\alpha$ over $R$, this is equivalent to the existence of $H \in
R[X]$ such that $$Q = \pi F + f\,G + P\, H.$$ Substituting
appropriately yields $$Q = \pi(F + T\, H) + f\, (G + g\, H).$$ Thus
$\pi x\in \mathfrak{a}$ if and only if $\ol{f}$ divides $\ol{Q}$.

On the other hand, we see that $x f(\alpha) \in \mathfrak{a}$ if and
only if there exist $Q_1, Q_2 \in R[X]$ such that
\begin{equation}\label{Q1Q2}
Q(\alpha) f(\alpha) = \pi (\pi Q_1(\alpha) + f(\alpha) Q_2(\alpha)).
\end{equation} Since $P$ is the minimal polynomial of $\alpha$ over
$R$, (\ref{Q1Q2}) is equivalent to the existence of $Q_3\in R[X]$
such that
\begin{equation}\label{Q3}
Q\, f = \pi ( \pi Q_1 + f\, Q_2) +P \, Q_3.
\end{equation}
Reducing modulo $\mathfrak{p}$ we get that $\ol{Q} = \ol{Q_3}\,
\ol{g}$. Let now $Q_4 \in R[X]$ be such that
\begin{equation}\label{Q4}
Q = \pi Q_4 + Q_3\, g.
\end{equation}
Substituting in (\ref{Q3}), we get $$ Q_3\, (f\, g - P) = \pi (\pi
Q_1 + f\, Q_2 - f\, Q_4).$$ Now letting $Q_5= Q_4 - Q_2 \in R[X]$,
we get that $xf(\alpha) \in \mathfrak{a}$ if and only if there exist
$Q_1, Q_3, Q_5 \in R[X]$ such that $$Q_3\, T = f\, Q_5 - \pi Q_1.$$
Reducing modulo $\mathfrak{p}$ again, we get $\ol{Q_3} \, \ol{T} =
\ol{f} \, \ol{Q_5}$. This is equivalent to saying that $\ol{f}$
divides $\ol{T}\, \ol{Q_3}$ or, equivalently, $\ol{h}$ divides
$\ol{Q_3}$ since $\ol{T}$ is nonzero. So $xf(\alpha) \in \mathfrak{a}$ if and only if there
exist $Q_6, Q_7 \in R[X]$ such that $$Q_3 =h\, Q_6 + \pi Q_7.$$
Substituting in (\ref{Q4}) we get
\begin{align*}
Q &= \pi (Q_4 +g\, Q_7 ) + g\, h\, Q_6\\ &= \pi Q_8 + g\, h\, Q_6,
\end{align*}
where $Q_8 = Q_4 + g Q_7 \in R[X]$. It finally follows that
$xf(\alpha) \in \mathfrak{a}$ if and only if $\ol{g}\, \ol{h}$
divides $\ol{Q}$.

(iii) Apply Lemma \ref{V} with $U = W$.
\end{proof}

\noindent{\bf Remark.}
It should be noted here that Lemma \ref{Stab} generalizes Lemma 6.1.5 of \cite{Cohen}. Furthermore, choosing $\ol{f}$ to be the radical of the square part of $\ol{P}$ would give a refinement of Theorem 6.1.5 of \cite{Cohen} (by the radical of a polynomial over $K$ we mean the product of all its distinct irreducible factors, and by the square part of a polynomial over $K$ we mean the quotient upon dividing the polynomial by its radical).

\section{\bf Lower Bounds of $\nu_\p(\mbox{Ind}_R(P))$}

\begin{prop}\label{IndexDegree}
Keep the notation and assumptions of Lemma \ref{Stab}. Then
$$\nu_\pi(\mbox{Ind}_R(P)) \geq \mbox{deg}\,(D).$$
\end{prop}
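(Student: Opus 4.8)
The plan is to exploit the chain of inclusions $A \subseteq \mbox{Stab}_L(\mathfrak{a}) \subseteq O_L$ furnished by Lemma \ref{Stab}(ii), together with the multiplicativity of the index along a tower of free $R$-modules of the same rank.

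First I would recall that $\mbox{Ind}_R(P) = [O_L:A]_R = \mbox{ord}_R(O_L/A)$ and that $A = R[\al]$ is a free $R$-module of rank $n = \mbox{deg}(P)$. By Lemma \ref{Stab}(ii) the module $\mbox{Stab}_L(\mathfrak{a}) = A + (U(\alpha)/\pi)A$ sits between $A$ and $O_L$, and by Lemma \ref{Stab}(iii) it is a free $R$-submodule of $O_L$ (necessarily of rank $n$ as well, being sandwiched between $A$ and the rank-$n$ module $O_L$). Hence $\mbox{Stab}_L(\mathfrak{a})/A$ and $O_L/\mbox{Stab}_L(\mathfrak{a})$ are torsion $R$-modules of finite type, so their order ideals over the discrete valuation ring $R$ are powers of $\mathfrak{p}$; write $[O_L:\mbox{Stab}_L(\mathfrak{a})]_R = \mathfrak{p}^{c}$ for some integer $c \geq 0$.

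Next I would invoke the short exact sequence of torsion finite-type $R$-modules
$$0 \longrightarrow \mbox{Stab}_L(\mathfrak{a})/A \longrightarrow O_L/A \longrightarrow O_L/\mbox{Stab}_L(\mathfrak{a}) \longrightarrow 0.$$
Concatenating a composition series of the submodule with the pullback of one of the quotient shows that the invariant $t$ of the middle term is the sum of the invariants of the two outer terms (this is the additivity of composition length cited in the Introduction). Thus $\mbox{ord}_R$ is multiplicative along the tower, and combining with Lemma \ref{Stab}(iii) we get
$$[O_L:A]_R = [O_L:\mbox{Stab}_L(\mathfrak{a})]_R \cdot [\mbox{Stab}_L(\mathfrak{a}):A]_R = \mathfrak{p}^{c}\cdot \mathfrak{p}^{\mbox{deg}(D)} = \mathfrak{p}^{\,c + \mbox{deg}(D)}.$$
Therefore $\nu_\pi(\mbox{Ind}_R(P)) = c + \mbox{deg}(D) \geq \mbox{deg}(D)$, which is the claim.

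The only delicate point — and the nearest thing to an obstacle — is the bookkeeping needed to make all of the above legitimate: one must be sure that each module in the chain is free of the same rank $n$ over $R$, so that the successive quotients are genuinely torsion modules of finite type and the invariant $t$ (hence $\mbox{ord}_R$) is defined and behaves additively on short exact sequences. This is precisely the kind of verification already performed in Lemma \ref{V}, so once it is transported to the present chain the proof is immediate.
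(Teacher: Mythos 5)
Your proposal is correct and follows essentially the same route as the paper: the paper's proof likewise uses the chain $A \subseteq \mbox{Stab}_L(\mathfrak{a}) \subseteq O_L$ to get that $[\mbox{Stab}_L(\mathfrak{a}):A]_R$ divides $[O_L:A]_R = \mbox{Ind}_R(P)$ and then applies the formula $[\mbox{Stab}_L(\mathfrak{a}):A]_R = \mathfrak{p}^{\deg(D)}$ from Lemma \ref{Stab}(iii). You have merely spelled out the multiplicativity of the order ideal along the tower (via the short exact sequence and additivity of composition length), which the paper leaves implicit in the word ``divides.''
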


\begin{proof}
Indeed, $A \subseteq \mbox{Stab}_L(\mathfrak{a}) \subseteq O_L$ and, therefore, $[\mbox{Stab}_L(\mathfrak{a}) :\,A]_R$ divides \linebreak $[O_L: \,A]_R =\mbox{Ind}_R(P)$. Now, using Lemma \ref{Stab} (ii) yields the claim.
\end{proof}

\begin{prop}
Keep the notation and assumptions of Lemma \ref{Stab}. If $O_L=A$, then $(\ol{f}, \ol{h}, \ol{T})=\ol{1}$.
\end{prop}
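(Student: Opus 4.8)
The plan is to reduce everything to Proposition \ref{IndexDegree}. If $O_L=A$ then $\mbox{Ind}_R(P)=[O_L:A]_R=R$, so $\nu_\pi(\mbox{Ind}_R(P))=0$, and Proposition \ref{IndexDegree} forces $0\ge\deg(D)$, i.e. $\deg(D)=0$; hence $\ol{D}$ is a nonzero constant, that is, $(\ol{f},\ol{g},\ol{T})=\ol{1}$. So what remains is to upgrade this to $(\ol{f},\ol{h},\ol{T})=\ol{1}$, where $\ol{h}=\ol{f}/(\ol{f},\ol{T})$. Note that since $\ol{h}$ divides $\ol{f}$ we have $(\ol{f},\ol{h},\ol{T})=(\ol{h},\ol{T})$, and if $\ol{f}$ were squarefree this would already equal $\ol{1}$ by the very definition of $\ol{h}$; so the real issue is to handle repeated irreducible factors of $\ol{f}$.

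Arguing by contradiction, suppose $(\ol{h},\ol{T})\ne\ol{1}$ and fix a monic irreducible $\ol{q}\in k[X]$ dividing both $\ol{h}$ and $\ol{T}$. From $\ol{q}\mid\ol{h}=\ol{f}/(\ol{f},\ol{T})$ together with $\ol{q}\mid\ol{T}$ one reads off that the multiplicity of $\ol{q}$ in $\ol{f}$ strictly exceeds its multiplicity in $\ol{T}$, the latter being at least $1$ (so in particular $\ol{q}^2\mid\ol{f}$), and that $\ol{q}\mid(\ol{f},\ol{T})$. Moreover $\ol{q}\mid\ol{f}$ and $\ol{q}\mid\ol{T}$, so from $(\ol{f},\ol{g},\ol{T})=\ol{1}$ we conclude $\ol{q}\nmid\ol{g}$.

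Now I would run the machinery of Lemma \ref{Stab} a second time, with $f$ replaced by $w:=hg$. Here $w$ is monic (taking $g$ monic) and $\ol{w}=\ol{h}\,\ol{g}$ divides $\ol{f}\,\ol{g}=\ol{P}$, so the lemma applies to $\mathfrak{a}'=\pi A+w(\alpha)A$; take a monic lift $c$ of $\ol{P}/\ol{w}=(\ol{f},\ol{T})$ in the role of ``$g$'', so that $\ol{h}\,\ol{c}=\ol{f}$ and hence $P\equiv wc\pmod{\pi}$, and write $P=wc+\pi T'$ with $\ol{T'}\ne\ol{0}$ (replacing $c$ by $c+\pi$ if necessary). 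The polynomial ``$D$'' attached to this data is a $D'$ with $\ol{D'}=(\ol{w},(\ol{f},\ol{T}),\ol{T'})$. We already know $\ol{q}$ divides $\ol{w}$ and $(\ol{f},\ol{T})$; and subtracting $P=fg+\pi T$ from $P=hgc+\pi T'$ gives $T'=T+g\cdot(f-hc)/\pi$, where $(f-hc)/\pi\in R[X]$ because $\ol{h}\,\ol{c}=\ol{f}$, so $\ol{T'}=\ol{T}+\ol{g}\cdot\ol{(f-hc)/\pi}$. Since $\ol{q}\mid\ol{T}$ and $\ol{q}\nmid\ol{g}$, it follows that $\ol{q}\mid\ol{T'}$ as soon as $\ol{q}\mid\ol{(f-hc)/\pi}$. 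Granting that, $\ol{q}\mid\ol{D'}$, hence $\deg(D')>0$, and Proposition \ref{IndexDegree} applied with $f$ replaced by $w$ gives $\nu_\pi(\mbox{Ind}_R(P))\ge\deg(D')>0$, contradicting $O_L=A$.

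The step I expect to be the crux is exactly the divisibility $\ol{q}\mid\ol{(f-hc)/\pi}$ — equivalently, obtaining sufficient control of the lift $f$ of $\ol{f}$ modulo the prime $\ol{q}$. This is the only place where the repeated factors of $\ol{f}$ intervene, and it is where I would concentrate the effort; it may well be that one should strengthen the hypothesis on $f$ at this point (for instance to $\ol{f}$ squarefree, as suggested in the Remark following Lemma \ref{Stab}, in which case, as observed above, the conclusion is immediate).
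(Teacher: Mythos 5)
Your opening paragraph already contains everything the paper itself proves: from $O_L=A$ one gets $A\subseteq\mbox{Stab}_L(\mathfrak{a})\subseteq O_L=A$, hence by Lemma \ref{Stab}(iii) (or, as you argue, via Proposition \ref{IndexDegree} together with $\mbox{Ind}_R(P)=R$) $\deg(D)=0$, i.e. $(\ol{f},\ol{g},\ol{T})=\ol{1}$. The paper's proof stops exactly there, so the mismatch you noticed between $\ol{g}$ and the $\ol{h}$ appearing in the statement is real, and your instinct that passing from $\ol{g}$ to $\ol{h}$ is ``the real issue'' is the right diagnosis.

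The upgrade, however, cannot be carried out, and the crux step you isolate, $\ol{q}\mid\ol{(f-hc)/\pi}$, is irreparably false: that reduction depends on the chosen lifts $f,h,c$ and is not controlled by $\ol{f},\ol{h},\ol{c}$. Indeed the statement with $\ol{h}$ is false as printed. Take $R=\Z_{(p)}$, $\pi=p$, $P=X^2-p$; since $P$ is Eisenstein at $p$, $O_L=A=R[\al]$ with $\al=\sqrt{p}$. Choose $f=X^2+pX-p$, $g=1$, $T=-X$: then $f$ is monic, $\ol{f}=X^2$ divides $\ol{P}=X^2$, $P=fg+\pi T$, and $\ol{T}=-X\neq\ol{0}$, so all hypotheses of Lemma \ref{Stab} hold; but $(\ol{f},\ol{T})=X$, hence $\ol{h}=X$ and $(\ol{f},\ol{h},\ol{T})=X\neq\ol{1}$, whereas $(\ol{f},\ol{g},\ol{T})=\ol{1}$ as it must be. (Running your second application of Lemma \ref{Stab} with $w=hg=X$ and $c=X$ in this example gives $(f-hc)/\pi=X-1$, which is not divisible by $\ol{q}=X$, so $\ol{T'}=-\ol{1}$, $\ol{D'}=\ol{1}$, and no contradiction arises --- exactly where you predicted the argument would break.) The conclusion is that the $\ol{h}$ in the Proposition must be read as $\ol{g}$ (equivalently, the assertion is $\deg(D)=0$), which is the ``only if'' half of a generalized Dedekind criterion; with that reading your first paragraph is a complete and correct proof, essentially identical to the paper's, and your closing remark that the $\ol{h}$-version would need extra hypotheses (e.g. $\ol{f}$ squarefree, or the canonical choice of $f$, $g$ as in Dedekind's criterion) is accurate.
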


\begin{proof}
It follows from Lemma \ref{Stab} that $\mbox{Stab}_L(\mathfrak{a})=O_L$ and $\mbox{deg}(D)=0$.
\end{proof}

Now the proof of Theorem \ref{main}

\begin{proof} ({\bf Theorem \ref{main}})
We localize at $\p$ and apply Proposition \ref{IndexDegree} to a suitable choice of a
divisor $\ol{f}$ of $\ol{P}$; namely, $\ol{f}=\prod_{i=1}^r \ol{P_i}^{s_i}$. Let $f=\prod_{i=1}^r\, P_i^{s_i}$ and $h=\prod_{i=1}^r\, P_i^{l_i-s_i}$. Then, $P=fh+\pi T$ and,
in this case, $\ol{D}=(\ol{f}, \ol{h}, \ol{T}) = \prod_{i=1}^r
\ol{P_i}\,^{m_i}$, where $m_i=\mbox{min}\{s_i, l_i - s_i, t_i\}$.
Since $m_i = \mbox{min}\{[\cfrac
{l_i}{2}], t_i\} = s_i$, the assertion now follows from Proposition \ref{IndexDegree}.
\end{proof}

\noindent{\bf Example.}

Let $R$ be a Dedekind ring, $K$ the quotient field of $R$, $\p=\pi R$ a nonzero principal prime ideal of $R$, and $P(X)=g(X)^n + a g(X)^m + b\in R[X]$ irreducible over $R$, with $g(X)\in R[X]$ monic, $\ol{g}(X)$ irreducible modulo $\p$, $n >1$, $n\geq m\geq 1$, $\nu_\p(a)=1$, and $\nu_\p(b)\geq 2$ (e.g. $P(X)=X^3+\sqrt{3} X^2 + 3 \in \Z[\sqrt{3}][X]$ with $\p=\sqrt{3}\Z[\sqrt{3}]$). Then, $\ol{P}(X) \equiv \ol{g}(X)^n$ modulo $\p$. For $T(X)=g(X)^m + b/\pi \in R[X]$, it is clear that $\ol{T}(X)=\ol{g}(X)^m$ is nonzero and $\ol{T}(X)$ divides $\ol{P}(X)$ modulo $\p$. As $l=n > 1$ and $t=m \geq 1$, $s=\mbox{min}\{[\cfrac{n}{2}], m\} \geq 1$. Thus, $\nu_\p(\mbox{Ind}_R(P)) \geq (1)\mbox{deg}(g(X))\geq 1$. Hence, a root $\al$ of $P$ never generates a power basis for the integral closure of $R$ in $K(\al)$.\\

\noindent{\bf Remark.} It was shown in \cite[Theorem 1.1 and Corollary 1.2]{JKS} that if $P(X)=X^n+aX^m+b\in \Z[X]$ is irreducible with $m|n$, $p|a$, $p^2|b$ for a prime integer $p$, then $p |\mbox{Ind}_\Z(P)$. This is a special case that follows from the example above without even requiring that $m|n$.



\begin{thebibliography} {Dillo 83}

\bibitem{At-Mac}
{{\sc M. Atiyah} {and} {\sc I. Macdonald}}, {\em
Introduction to Commutative Algebra}, Addison-Wesley Company, 1969.

\bibitem{Cha-Dea}
{\sc M.E. Charkani} and {\sc A. Deajim}, {\em Generating a power
basis over a Dedekind ring}, J. Number Theory {\bf 132} (2012), 2267--2276.

\bibitem{CD2}
{\sc M.E. Charkani} and {\sc A. Deajim}, {\em Relative index in extensions of Dedekind rings}, JP J. Algebra, Number Theory and Appl
{\bf 27} (2012), 73--84.

\bibitem{Cohen}
{\sc H. Cohen}, {\em A Course in Computational Algebraic Number
Theory}, GTM 138, Springer-Verlag, 1996.

\bibitem{EC}
{\sc L. El Fadil} and {\sc M.E. Charkani}, {\em Generalization of the discriminant and applications}, Arab. J. Sci. Eng. {\bf 29} (2004), 93--98.

\bibitem{FT}{\sc  A. Fr\"ohlich  and M. Taylor}, {\em Algebraic Number Theory}, Combridge Studies in Advenced Mathematics 27, 1993.

\bibitem{JKS} {\sc A. Jakhar}, {\sc S. Khanduja}, and {\sc N. Sangwan}, {\em On prime divisors of the index of an algebraic integer}, J. Number Theory {\bf 166} (2016), 47--61.


\bibitem{SH}
{\sc I. Swanson} and {\sc C. Huneke}, {\em Integral Closure of Ideals, Rings, and Modules}, Cambridge University Press, 2006.

\bibitem{ZS}
{\sc O. Zariski} and {\sc P. Samuel}, {\em Commutative Algebra -- V. 1}, GTM 28, Springer-Verlag, 1958.

\end{thebibliography}
\end{document}